 \newcommand{\nn}{\mathbb N}
 \newcommand{\rr}{\mathbb R}
\newtheorem{thm}{Theorem}
\newtheorem{coro}{Corollary}
\begin{document}
\normalsize

\title{Product measures without utilizing integrals}
\author{Hasan G\"ul\footnote{Middle East Technical University, 
e032209@metu.edu.tr} \ and S\"uleyman \"Onal\footnote{Middle East Technical University, osul@metu.edu.tr}}

\date{13.05.2016}
\maketitle

\noindent
{\bf Abstract:} {\normalsize
We show that product measures can be handled without resorting to integrals. 
}\\

\noindent
{\bf MSC:} {\normalsize 28A35}\\
{\bf Keywords:} {\normalsize Analysis; Classical measure theory; Measures and integrals in product spaces}

\section{Introduction}
In many textbooks on Real Analysis or Measure Theory, if not in all, product measures on product spaces are handled by utilizing the monotone and dominated convergence theorems which involve integrals with respect to some measures   (cf. \cite{AB, HS, LA, RO, RU}). Of course, we do not find this approach to be circular, but feel a certain uneasiness.

The necessary foundation for studying a measure space $ (X, \Sigma, \mu) $ is the characterization of null sets with respect to $ \mu $ and the establishment of $ \sigma$-additivity of it on the elements of the semiring ($ \sigma$-algebra) $ \Sigma $ .

In this short note, we show that an introduction to product measure and laying the aforementioned foundation without resorting to integrals is possible in a very simple way. 

\section{Preliminaries}
In this section, we recall some basic definitions.

Let $ X $ be a nonempty set.

\textbf{Definition 2.1.}   A collection $ \Sigma $ of subsets of $ X $ is called a \textbf{semiring} if it satisfies the following properties:

 1. $ \emptyset \in \Sigma $,

 2. If $ A, B \in \Sigma$, then $A \cap B \in \Sigma$,

 3. For every $A, B \in \Sigma$, there exist $ C_{1}, ... , C_{n} \in \Sigma $ such that $A \setminus B =  \cup_{i=1}^{n} C_{i}$ and $ C_{i}\cap C_{j}= \emptyset$ if $ i \not=j $.

\textbf{Definition 2.2.} A nonempty collection $ \Sigma $ of subsets of $ X $ which is closed under finite intersections and complementation is called an \textbf{algebra} of sets. An algebra $\Sigma$ of subsets of $X$ is called a $\mathbf{\sigma}$\textbf{-algebra} if every union of a countable collection of elements of $\Sigma$ is again in $\Sigma$.

\textbf{Definition 2.3.}  Let $\Sigma$ be a semiring of subsets of  $X$. A set function \\ \ $ \mu : \Sigma  \rightarrow [0, \infty]$ is called a \textbf{measure} on $ \Sigma $ if it satisfies the following properties:

1. $\mu(\emptyset) = 0$ ,

2. If $ \lbrace A_{n} : n \in \nn \rbrace$ is a disjoint 
sequence in $ \Sigma  $ satisfying $ \cup_{n=1}^{\infty} 
A_{n} \in \Sigma$, then
\[ \mu ( \cup_{n=1}^{\infty} A_{n}) = \Sigma_{n=1}
^{\infty} \mu(A_{n})\] 
holds.

\textbf{Definition 2.4.} A triplet $ (X, \Sigma, \mu) $, where $ \Sigma $ is a semiring of subsets of $ X $ and $ \mu $ is a measure on $ \Sigma $ is called a \textbf{measure space} .

\textbf{Definition 2.5.} Let $ ( X, \Sigma, \mu) $ be a measure space and $ A \subseteq X $. If $ \mu(A) < \infty $, then $ A $ is said to be of \textbf{finite measure}. If $ A = \bigcup_{n \in \nn} A_{n} $, where each $ A_{n}$ is of finite measure, then $ A $ is said to be of $ \sigma $\textbf{-finite measure}.

\textbf{Definition 2.6.} A set function $\mu: P(X)\rightarrow [0, \infty]$ defined on the power set $\mathcal{P}(X)$ of $ X$ is called an \textbf{outer measure} if it satisfies these properties:

l. $\mu(\emptyset) = 0$ ,

2. If $A \subseteq B$, then $\mu(A) \leq \mu(B)$,

3. $ \mu( \cup_{n=1}^{\infty}A_{n}) \leq \Sigma_{n=1}^{\infty}\mu(A_{n})$ holds for every sequence $ (A_{n})_{n \in \nn}$ of subsets of X.

 Let $ \mathcal{F} $ be a collection of subsets of $ X $  containing the empty set. Also, let $ \mu: \mathcal{F} \rightarrow [0, \infty]$ be a set function such that  $\mu(\emptyset) = 0$. For every subset $ A $ of $ X $ we define $$\mu^{*}(A) = \inf \lbrace \Sigma_{n=1}^{\infty}\mu(A_{n}): (A_{n}) \text{ \rm is a sequence of } \mathcal{F} \text{ \rm with} A \subseteq \cup_{n=1}^{\infty}A_{n} \rbrace .$$ If there is no sequence $ (A_{n})_{n \in \nn} $ of $\mathcal{F}$  such that $A \subseteq \cup_{n=1}^{\infty}A_{n}$ , then we let $ \mu^{*}(A) = \infty$. 
 
The set function $\mu^{*}: P(X)\rightarrow [0, \infty]$ is an outer measure (called the \textbf{outer measure generated} by the set function $\mu : \mathcal{F}\rightarrow [0, \infty])$ satisfying 
$ \mu^{*}(A) \leq \mu(A) $ 
for each $A \in \mathcal{F}$.

\textbf{Definition 2.7.}  Let $ ( X, \Sigma_{X}, \mu_{X}) $ and
$ ( Y, \Sigma_{Y}, \mu_{Y} ) $ be measurable spaces. The subset $$ \Sigma_{X} \otimes \Sigma_{Y} := \lbrace A \times B : A \in \Sigma_{X}, B \in \Sigma_{Y} \rbrace$$
of $ X \times Y $ is called the \textbf{product semiring}. The set function 
$ \mu_{X}\times \mu_{Y} : \Sigma_{X} \otimes \Sigma_{Y}\rightarrow [0, \infty]$ defined by
$$\mu_{X}\times \mu_{Y}(A \times B) = \mu_{X}(A)\cdot\mu_{Y}(B)\ \ \text{\rm for all } A \times B \in \Sigma_{X} \otimes \Sigma_{Y}.$$
 is a measure on  $\Sigma_{X} \otimes \Sigma_{Y}$ and is called  the \textbf{product measure} of $ \mu_{X}$ and  $\mu_{Y} $ and is finitely additive.

\section{Product measures}

\begin{thm}

 Let $ ( X, \Sigma_{X}, \mu_{X}) $ and
$ ( Y, \Sigma_{Y}, \mu_{Y} ) $ be measurable spaces where $ \Sigma_{X}, \;  \Sigma_{Y}  $ are semi-rings ($ \sigma- $algebras) and $ \mu_{X} ,\; \mu_{Y} $ are $  \sigma- $ additive measures. Let $ D \subseteq X \times Y $ and suppose $ D \subseteq  \cup_{n \in \nn} B_{n} \times C_{n}$ where $ \lbrace B_{n} \times C_{n} : n \in \nn \rbrace$ are pairwise disjoint sequences with $  B_{n} \in \Sigma_{X} $ and $  C_{n} \in \Sigma_{Y} $. Define $ D^{>r}:= \lbrace x \in X : \mu_{Y}^{*}(D^{x}) > r \rbrace$ for $ r \in \rr^{+} $ and $ D^{x}:= \lbrace y : (x,y) \in D \rbrace $. Let $  \mu_{X}^{*},\; \mu_{Y}^{*}$ be outer measures generated by $\mu_{X}$ and $ \mu_{Y}$, respectively, and suppose $ \mu_{X}^{*}(D^{>r}) > s  $ for some $ s \in \rr^{+},$ Then, there exists a finite subset $ F \subseteq \nn $ such that $$rs < \underset{n \in F}\Sigma \mu_{X}(B_{n}) \mu_{Y}(C_{n}) $$.

\end{thm}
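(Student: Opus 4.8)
Proof proposal.

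The plan is to reduce the statement to a single Chebyshev--Markov type inequality for a "simple" function on $X$ and to extract the finite index set only at the very end. Set $w_n := \mu_Y(C_n)$ and define $g\colon X\to[0,\infty]$ by $g(x):=\sum_{n=1}^{\infty} w_n\,\mathbf 1_{B_n}(x)=\sum_{n:\,x\in B_n}\mu_Y(C_n)$. First I would record the sectionwise inclusion $D^{>r}\subseteq\{x:g(x)>r\}$: if $(x,y)\in D$ then $(x,y)\in B_n\times C_n$ for some $n$, so $D^{x}\subseteq\bigcup_{n:\,x\in B_n}C_n$, and countable subadditivity of $\mu_Y^{*}$ together with $\mu_Y^{*}(C_n)\le\mu_Y(C_n)$ gives $\mu_Y^{*}(D^{x})\le\sum_{n:\,x\in B_n}\mu_Y(C_n)=g(x)$; hence $x\in D^{>r}$ forces $g(x)>r$ (notice that the disjointness hypothesis is not needed for this inclusion, subadditivity alone suffices). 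By monotonicity of $\mu_X^{*}$ it then suffices to prove the bound
\[
r\,\mu_X^{*}(\{g>r\})\ \le\ \sum_{n=1}^{\infty}\mu_X(B_n)\,\mu_Y(C_n).
\]

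To establish this I would work with the finite truncations $g_N:=\sum_{n=1}^{N}w_n\,\mathbf 1_{B_n}$. These are genuine simple functions over the finite algebra generated by $B_1,\dots,B_N$, whose atoms $A_S:=\bigcap_{n\in S}B_n\cap\bigcap_{n\le N,\ n\notin S}(X\setminus B_n)$, indexed by $S\subseteq\{1,\dots,N\}$, lie in $\Sigma_X$, and on $A_S$ one has $g_N\equiv\sum_{n\in S}w_n$. A finite rearrangement---the elementary, integral-free instance of Tonelli---gives $\sum_{n=1}^{N}w_n\,\mu_X(B_n)=\sum_{S}\big(\sum_{n\in S}w_n\big)\mu_X(A_S)$, so restricting to those $S$ with $\sum_{n\in S}w_n>r$ yields
\[
r\,\mu_X(\{g_N>r\})\ \le\ \sum_{S}\Big(\sum_{n\in S}w_n\Big)\mu_X(A_S)\ =\ \sum_{n=1}^{N}w_n\,\mu_X(B_n)\ \le\ \sum_{n=1}^{\infty}\mu_X(B_n)\,\mu_Y(C_n).
\]

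Now I would pass to the limit and conclude. Since $g_N\uparrow g$, the sets $\{g_N>r\}$ increase to $\{g>r\}$, so writing $\{g>r\}$ as the disjoint union of the differences $P_N:=\{g_N>r\}\setminus\{g_{N-1}>r\}\in\Sigma_X$ (with $g_0:=0$) and using countable subadditivity of $\mu_X^{*}$ together with $\mu_X^{*}(P_N)\le\mu_X(P_N)$ and finite additivity of $\mu_X$, one gets $\mu_X^{*}(\{g>r\})\le\lim_N\mu_X(\{g_N>r\})$, which with the previous line gives the displayed Chebyshev bound. Combining everything, $s<\mu_X^{*}(D^{>r})\le\mu_X^{*}(\{g>r\})\le\tfrac1r\sum_{n}\mu_X(B_n)\mu_Y(C_n)$, hence $rs<\sum_{n=1}^{\infty}\mu_X(B_n)\mu_Y(C_n)$; as the nonnegative partial sums increase to a total strictly exceeding $rs$, some finite $F=\{1,\dots,N\}$ already satisfies $rs<\sum_{n\in F}\mu_X(B_n)\mu_Y(C_n)$.

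The main obstacle is the middle step---the atom-based finite Tonelli rearrangement and the subsequent limit---rather than the bookkeeping at the end. Two points need care. First, passing from the finite-level bound on $\mu_X(\{g_N>r\})$ to the outer measure of the countable union $\{g>r\}$, which I handle above through subadditivity over the disjoint pieces $P_N$. Second, the case in which $\Sigma_X$ is only a semiring, so that the atoms $A_S$ are not single $\Sigma_X$-sets: there I would either pass to the ring generated by $\Sigma_X$---on which $\mu_X$ extends uniquely and the generated outer measure $\mu_X^{*}$ is unchanged---or expand each $A_S$ as a finite disjoint union of $\Sigma_X$-sets via the semiring difference property, invoking only finite additivity of $\mu_X$. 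I would also dispose of the degenerate cases in which some $\mu_X(B_n)$ or $\mu_Y(C_n)$ is infinite, where the right-hand side is $\infty$ and the inequality is immediate.
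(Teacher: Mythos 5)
Your proposal is correct, but it takes a genuinely different route from the paper's proof. The paper works pointwise and inside the product space: for each $x\in D^{>r}$ it selects a finite set $M_x\subseteq\{n: x\in B_n\}$ with $r<\sum_{n\in M_x}\mu_Y(C_n)$, covers $D^{>r}$ by the countably many sets $\bigcap_{n\in M_x}B_n\in\Sigma_X$, extracts a finite subfamily $M\subseteq D^{>r}$ whose union still has outer measure exceeding $s$, and finally compares $\bigcup_{x\in M}\bigl((\bigcap_{n\in M_x}B_n)\times(\bigcup_{n\in M_x}C_n)\bigr)$ with $\bigcup_{n\in F}(B_n\times C_n)$, where $F=\bigcup_{x\in M}M_x$, using the finite additivity of $\mu_X\times\mu_Y$ on the product semiring. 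The disjointness of the rectangles enters exactly there: for $n,m\in M_x$ the sets $B_n,B_m$ share the point $x$, so $C_n\cap C_m=\emptyset$, whence $\mu_Y\bigl(\bigcup_{n\in M_x}C_n\bigr)>r$. You instead collapse everything onto $X$: after the first step (which is common to both proofs, namely $D^x\subseteq\bigcup_{n:\,x\in B_n}C_n$), the product space, the product measure, and the disjointness hypothesis never reappear, and the theorem is reduced to a Markov--Chebyshev inequality for $g=\sum_n\mu_Y(C_n)\mathbf{1}_{B_n}$, proved by a finite rearrangement over atoms plus a monotone limit. Your route buys a strictly stronger theorem (pairwise disjointness of the $B_n\times C_n$ is never used), an $F$ that is simply an initial segment $\{1,\dots,N\}$, and an explicit justification---via the disjointified pieces $P_N$---of the continuity-from-below step that the paper invokes silently when it asserts the existence of the finite set $M$. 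The paper's route, in turn, stays close to the rectangle geometry and to the finitely additive product measure, which is the object the rest of the paper is about, and avoids the bookkeeping with atoms of a finite algebra.

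Two small repairs to your write-up, both within the scope of what you already flag. First, the atoms $A_S$ need not lie in $\Sigma_X$ when it is only a semiring (and $A_\emptyset$ need not even lie in the generated ring, since complements of semiring members may be unavailable); but only the atoms with $S\neq\emptyset$ matter, because $A_\emptyset$ carries weight $0$ and is disjoint from $\{g_N>r\}$ as $r>0$, and for $S\neq\emptyset$ one has $A_S=\bigl(\bigcap_{n\in S}B_n\bigr)\setminus\bigcup_{n\le N,\ n\notin S}B_n$, which is a finite disjoint union of semiring members; so your generated-ring fix goes through using finite additivity alone. Second, your remark that the case of an infinite $\mu_X(B_n)$ or $\mu_Y(C_n)$ is immediate is slightly off under the convention $0\cdot\infty=0$: such a term does not by itself make the right-hand side infinite. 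But no separate case is actually needed, since your rearrangement identity and the limit argument are valid in $[0,\infty]$ exactly as written.
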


\begin{proof}

 Let $ x \in D^{>r} $ and define $ N_{x} := \lbrace n \in \nn : x \in B_{n} \rbrace$. Then, $ D^{x}\subseteq \bigcup_{n \in N_{x}} C_{n} $. Now
 $$ r < \mu_{Y}^{*}(D^{x}) \leq \Sigma_{n \in N_{x}}\mu_{Y}^{*}(C_{n}) \leq \Sigma_{n \in N_{x}}\mu_{Y}(C_{n}).$$

Let $ M_{x} $ be a finite subset of $ N_{x} \subseteq \nn $ such that $ r < \Sigma_{n \in M_{x}}\mu_{Y}(C_{n}) $. Note that $ x \in B_{n}$ for all $ n \in M_{x} $. Thus, $ D^{>r}\subseteq  \bigcup_{x \in  D^{>r} }(\underset{n \in M_{x}}\cap B_{n})   $ and the family  $\lbrace  \underset{n \in M_{x}}\cap B_{n} : x \in D^{>r}  \rbrace \subseteq \Sigma_{X}$ is countable since $ \lbrace M_{x} : M_{x}\subseteq \nn \rbrace \subseteq \mathcal{P}_{< \aleph_{o}}(\nn) .$

We have  $$ s <  \mu_{X}^{*}(D^{>r}) \leq \mu_{X}^{*}(  \bigcup_{x 
\in  D^{>r} }(\underset{n \in M_{x}}\cap B_{n})) . $$

 Thus, there exist a finite subset $ M $ of $ D^{>r}$ with
 $$ s < \mu_{X}^{*}( \underset{x \in M} \bigcup (\underset{n \in 
 M_{x}} \cap B_{n}) .$$ Set $ F := \bigcup_{x \in M}M_{x} $. Since $ 
 \underset{x \in M}\bigcup(\underset{n \in M_{x}}\cap B_{n} \times 
 \underset{n \in M_{x}}\cup C_{n}) \subseteq \underset{n \in F}
 \bigcup (B_{n} \times C_{n})$ we have \\
 $rs < \underset{n \in F}\Sigma \mu_{X}(B_{n}) \mu_{Y}(C_{n}) $ by the finite additivity of $ \mu_{X}\times \mu_{Y} $ on the semiring \\
  $\Sigma_{X} \otimes \Sigma_{Y}$.
 
\end{proof}

\coro Let $ ( X, \Sigma_{X}, \mu_{X}) $ and  $ ( Y, \Sigma_{Y}, \mu_{Y} ) $ be measurable spaces where $ \Sigma_{X}, \; \Sigma_{Y}  $ are semi-rings ($ \sigma- $algebras) and $ \mu_{X}$ and $ \mu_{Y} $ are $  \sigma- $ additive measures. Then  $\mu_{X} \times \mu_{Y}$ is countably additive on $ X \times Y $. 
\begin{proof}
 Let $( X, \Sigma_{X}, \mu_{X}) $ and $ ( Y, \Sigma_{Y}, \mu_{Y} ) $ be measurable spaces where $ \Sigma_{X} $  $ , \Sigma_{Y}  $ are semi-rings ($ \sigma- $algebras) and $ \mu_{X} ,\; \mu_{Y} $ are $  \sigma- $ additive measures. Let $ D = B \times C = \bigcup_{n \in \nn}(B_{n} \times C_{n})$ with  
$ B_{n}, B \in  \Sigma_{X} $ and $ C_{n}, C \in  \Sigma_{Y}$ for all $n \in \nn  $ and $ B_{n} \times C_{n} $ are pairwise disjoint for all $n \in \nn  $.

We may suppose, without loss of generality, $ \mu(B)>0$ and also,$ \mu(C)>0 .$

Suppose $ t< \mu_{X}(B) \cdot \mu_{Y}(C)$. We may find $ r,s \in 
\rr^{+} $ with $ 0 < r < \mu_{X}(B)$ and $ 0 < s < \mu_{Y}(C)$ such 
that $ t < r \cdot s $. Consider $ (B \times C)^{x} $ where $ x \in 
B. $ Clearly, \\ $ \mu^{*}((B \times C)^{x}) = \mu(C) $ and $ D^{>r}=B $. Thus, $ r < \mu(C) $ and $ s < \mu^{*}(D^{>r}) .$ By the theorem above, we may find a finite subset $ F $ of $ \nn $ such that $$ t < \underset{n \in F}\Sigma \mu(A_{n})\cdot \mu(B_{n}) .$$

\end{proof}

\begin{coro} Let $ ( X, \Sigma_{X}, \mu_{X}) $ and  $ ( Y, \Sigma_{Y}, \mu_{Y} ) $ be measurable spaces where $ \Sigma_{X}, \; \Sigma_{Y}  $ are semi-rings ($ \sigma- $algebras) and $ \mu_{X} ,\; \mu_{Y} $ are $  \sigma- $ additive measures. If $ D \subseteq X \times Y $
and $ (\mu_{X} \times \mu_{Y})^{*}(D) = 0 $ then $ \mu_{Y}^{*}(D^{x})= 0 $ for $ \mu_{X}$-almost all $ x \in X $. Conversely, if $ D $ is 
$ (\mu_{X} \times \mu_{Y})^{*} $ measurable and   $ \mu_{Y}^{*}(D^{x})= 0 $ for $ \mu_{X}$-almost all $ x$, then $ (\mu_{X} \times \mu_{Y})^{*}(D) = 0$, provided 
$ \mu_{X}, \; \mu_{Y} $ are $ \sigma$-finite measures. 
\end{coro}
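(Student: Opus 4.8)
The plan is to prove the two implications separately, using the Theorem (and the Chebyshev-type lower bound it encodes) as the integral-free substitute for Tonelli's theorem in each direction. For the forward implication, I would first observe that the exceptional set decomposes as $\{x : \mu_Y^*(D^x) > 0\} = \bigcup_{k \in \nn} D^{>1/k}$, so by countable subadditivity of $\mu_X^*$ it suffices to show $\mu_X^*(D^{>r}) = 0$ for each fixed $r > 0$. Suppose instead $\mu_X^*(D^{>r}) > s$ for some $s > 0$. Since $(\mu_X\times\mu_Y)^*(D)=0$, for every $\veps>0$ there is a cover of $D$ by members of $\Sigma_X\otimes\Sigma_Y$ of total product measure $<\veps$; a standard disjointification inside the semiring (replace $R_n$ by $R_n\setminus\bigcup_{i<n}R_i$, itself a finite disjoint union of rectangles, and invoke finite additivity) turns this into a pairwise disjoint cover $\{B_n\times C_n\}$ with $\sum_n \mu_X(B_n)\mu_Y(C_n)<\veps$. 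Applying the Theorem to this cover yields a finite $F$ with $rs < \sum_{n\in F}\mu_X(B_n)\mu_Y(C_n) \le \sum_n \mu_X(B_n)\mu_Y(C_n) < \veps$; taking $\veps \le rs$ is the desired contradiction.

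For the converse I would first reduce to finite total measure using $\sigma$-finiteness: write $X=\bigcup_i X_i$ and $Y=\bigcup_j Y_j$ with $X_i\in\Sigma_X$, $Y_j\in\Sigma_Y$ of finite measure, and split $D=\bigcup_{i,j}D\cap(X_i\times Y_j)$. Each piece is measurable, has null slices for $\mu_X$-almost all $x$, and sits inside a rectangle of finite product measure, so by countable subadditivity it is enough to prove each piece is product-null. Thus assume $D\subseteq X_0\times Y_0$ with $X_0\times Y_0\in\Sigma_X\otimes\Sigma_Y$ of finite product measure (and both factors of positive measure, else the conclusion is trivial). The key move is to pass to the complement $D':=(X_0\times Y_0)\setminus D$. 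Carath\'eodory measurability of $D$, tested against $X_0\times Y_0$, gives $(\mu_X\times\mu_Y)^*(D)+(\mu_X\times\mu_Y)^*(D')=\mu_X(X_0)\mu_Y(Y_0)$. Now for almost all $x\in X_0$ the slice $(D')^x=Y_0\setminus D^x$ satisfies $\mu_Y^*((D')^x)=\mu_Y(Y_0)$, since $\mu_Y^*(D^x)=0$ and subadditivity forces no loss. Hence for any $r<\mu_Y(Y_0)$ the set $(D')^{>r}$ contains $X_0$ minus a $\mu_X$-null set, so $\mu_X^*((D')^{>r})=\mu_X(X_0)>s$ for any $s<\mu_X(X_0)$. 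Applying the Theorem to an arbitrary disjoint rectangle cover of $D'$ shows every such cover has total product measure $>rs$, whence $(\mu_X\times\mu_Y)^*(D')\ge rs$; letting $r\uparrow\mu_Y(Y_0)$ and $s\uparrow\mu_X(X_0)$ gives $(\mu_X\times\mu_Y)^*(D')\ge\mu_X(X_0)\mu_Y(Y_0)$. Combined with the displayed identity, and with $D'\subseteq X_0\times Y_0$ giving the reverse inequality, this forces $(\mu_X\times\mu_Y)^*(D)=0$.

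The conceptual core of the converse, namely using the Theorem on $D'$ to manufacture a matching lower bound and then subtracting via measurability, is clean; the main obstacle is foundational rather than combinatorial. Specifically, I must justify that the generated outer measure agrees with $\mu_X\times\mu_Y$ on the product semiring, so that $(\mu_X\times\mu_Y)^*(X_0\times Y_0)=\mu_X(X_0)\mu_Y(Y_0)$, and that every rectangle is $(\mu_X\times\mu_Y)^*$-measurable. Both are the standard Carath\'eodory facts for a countably additive set function on a semiring, and here they rest on the countable additivity established in Corollary~1, so no integral enters. The remaining care lies in the bookkeeping of the $\sigma$-finite reduction, namely disjointifying the rectangles $X_i\times Y_j$ in the semiring and checking that slices behave correctly, which is routine.
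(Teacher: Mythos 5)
Your proposal is correct and takes essentially the same route as the paper's own proof: the forward direction writes the exceptional set as $\bigcup_{k} D^{>1/k}$ and applies the Theorem in contrapositive form, and the converse uses the identical complement trick (your $D'$ playing the role of the paper's $D^{c}$), obtaining the lower bound $(\mu_X\times\mu_Y)^*(D') \geq \mu_X(X_0)\mu_Y(Y_0)$ from the Theorem and then subtracting via Carath\'eodory measurability. The only difference is that you make explicit the steps the paper glosses over — the disjointification of covers needed before the Theorem can be invoked, the bookkeeping of the $\sigma$-finite reduction, and the standard Carath\'eodory facts (resting on Corollary~1) that the generated outer measure extends $\mu_X\times\mu_Y$ and renders rectangles measurable — so your write-up is a more complete rendering of the same argument rather than a different one.
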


\begin{proof}
 If $ D \subseteq X \times Y $
and $ (\mu_{X} \times \mu_{Y})^{*}(D) = 0 $ then $ \mu^{*}_{X} (D^{>r})= 0 $ for all $ r > 0$ by the theorem.  Hence $ \mu_{X}^{*}(\bigcup_{n \in \nn}D^{>\frac{1}{n}})= 0 $. Thus, $ \mu_{Y}^{*}(D^{x})= 0 $ for $ \mu_{X}$-almost all $ x \in X $.

 Conversely, let $ \mu_{X}, \; \mu_{Y} $ be $ \sigma$-finite measures. Suppose $ D $ is 
$ (\mu_{X} \times \mu_{Y})^{*} $ measurable and for $ \mu_{X}$-almost all $ x$,  $ \mu_{Y}^{*}(D^{x})= 0 $. Then $ (\mu_{X} \times \mu_{Y})^{*}(D) = 0 $ .

By the $ \sigma$-additivity of measures, we may assume $\mu_{X}(X), \; \mu_{Y}(Y) < \infty$. Suppose $ \mu_{Y}(D^{x}) = 0$ for $ \mu_{X}$-almost all $ x \in X $. Since $ D^{c}= X \times Y \setminus D $, we have $\mu_{Y}((D^{c})^{x})=  \mu_{Y}(Y)$ for $ \mu_{X}$-almost all $ x \in X $. Thus,
$$\mu_{X}^{*}(\lbrace x : \mu_{Y}((D^{c})^{x}) = \mu_{Y}(Y) \rbrace ) = \mu_{X}(X)
$$
Hence, by the theorem, $(\mu_{X} \times \mu_{Y})^{*}(D^{c})=  \mu_{X}(X)\cdot \mu_{Y}(Y)$. Therefore,\\ $ (\mu_{X} \times \mu_{Y})^{*}(D) = 0 $.

\vspace{5mm}

Note that both monotone and dominated convergence theorems themselves may be proved with this approach, which is, using measure theory instead of integration with a slightly different interpretation.

\end{proof}
\end{document}